\newtheorem{theorem}{Theorem}
\theoremstyle{plain}
\newtheorem{corollary}{Corollary}
\newtheorem{definition}{Definition}
\newtheorem{lemma}{Lemma}
\newtheorem{notation}{Notation}
\newtheorem{proposition}{Proposition}
\newtheorem{remark}{Remark}
\numberwithin{equation}{section}
\begin{document}
\title[Sequential Warped Products: Curvature and Killing Vector Fields]{%
Sequential Warped Products: Curvature and conformal vector fields}
\date{July 30, 2016}
\subjclass[2010]{Primary 53C21, 53C25; Secondary 53C50, 53C80}
\keywords{Warped product manifold, spacetimes, curvature, Killing vector
fields, geodesics, concircular vector fields}

\begin{abstract}
In this note, we introduce a new type of warped products called as
sequential warped products to cover a wider variety of exact solutions to
Einstein's equation. First, we study the geometry of sequential warped
products and obtain covariant derivatives, curvature tensor, Ricci curvature
and scalar curvature formulas. Then some important consequences of these
formulas are also stated. We provide characterizations of geodesics and two
different types of conformal vector fields, namely, Killing vector fields
and concircular vector fields on sequential warped product manifolds.
Finally, we consider the geometry of two classes of sequential warped
product space-time models which are sequential generalized Robertson-Walker
spacetimes and sequential standard static spacetimes.
\end{abstract}

\author{Uday Chand De}
\address[U. C. De]{Department of Pure Mathematics, University of Calcutta,
35 Ballygaunge Circular Road, Kolkata 700019, West Bengala, India}
\email{uc$\_$de@yahoo.com}
\author{Sameh Shenawy}
\address[S. Shenawy]{Basic Science Department, Modern Academy for
Engineering and Technology, Maadi, Egypt}
\email{drssshenawy@eng.modern-academy.edu.eg, drshenawy@mail.com}
\author{B\"{u}lent \"{U}nal}
\address[B. \"{U}nal]{Department of Mathematics, Bilkent University,
Bilkent, 06800 Ankara, Turkey}
\email{bulentunal@mail.com}
\maketitle

\section{Introduction}

O'Neill and Bishop defined warped product manifolds to construct Riemannian
manifolds with negative sectional curvature\cite{Bishop:1969}. Since then
this notion has played some important roles in differential geometry as well
as in physics because warped product space-time models are used to obtain
exact solutions to Einstein's equation \cite{Apostolopoulos:2005, AD1, AD,
Berestovskii:2008, Besse2008, Ivancevic:2007, Oneill:1983}.

Doubly and multiply warped product manifolds are generalizations of (singly)
warped product manifolds\cite{Dobbaro2005, Unal2001A, Unal:2001}. In this
article, we define a new class of warped product manifolds, called as
sequential warped products where the base factor of the warped product is
itself a new warped product manifold. Sequential warped products can be
considered as a generalization of singly warped products. There are many
space-times where base, fiber or both are expressed as a warped product
manifolds. Among many such examples, we would like to mention especially
non-trivial ones such as Taub-Nut and stationary metrics (see \cite%
{Stephani:2003}) also Schwarzschild and generalized Riemannian anti de
Sitter $\mathbb{T}^2$ black hole metrics (see \S 3.2 of \cite{ACD} for
details). Moreover, some base conformal warped product space-times can be
expressed as a sequential warped product (see \cite{Dobbaro2008}).

We first introduce fundamental definitions about the new concept and state
some related remarks.

\begin{definition}
Let $M_{i}$ be three pseudo-Riemannian manifolds with metrics $g_{i}$ for $%
i=1,2,3$. Let $f:M_{1}\rightarrow (0,\infty )$ and $h:M_{1}\times
M_{2}\rightarrow (0,\infty )$ be two smooth positive functions on $M_{1}$
and $M_{1}\times M_{2}$, respectively. Then the sequential warped product
manifold, denoted by $\left( M_{1}\times _{f}M_{2}\right) \times _{h}M_{3}$,
is the triple product manifold $\bar{M} = \left( M_{1}\times M_{2}\right)
\times M_{3}$ furnished with the metric tensor%
\begin{equation*}
\bar{g}=\left( g_{1}\oplus f^{2}g_{2}\right) \oplus h^{2}g_{3}
\end{equation*}%
The functions $f$ and $h$ are called warping functions.
\end{definition}

Note that if $(M_{i},g_{i})$ are all Riemannian manifolds for any $i=1,2,3$,
then the sequential warped product manifold $\left( M_{1}\times
_{f}M_{2}\right) \times _{h}M_{3}$ is also a Riemannian manifold.

\begin{remark}
The warped product of the form $M_{1}\times _{f_{1}}\left( M_{2}\times
_{f_{2}}M_{3}\right) $ furnished by the metric%
\begin{equation*}
g=g_{1}+f_{1}^{2}\left( g_{2}+f_{2}^{2}g_{3}\right)
\end{equation*}%
is called the iterated warped product manifold of the manifolds $M_{1},$ $%
M_{2}$ and $M_{3}$. As a metric space, the iterated warped product manifold
is equal to the sequential warped product $\left( M_{1}\times
_{f}M_{2}\right) \times _{h}M_{3}$ where $f=f_{1}$ and $h=f_{1}f_{2}$.
Similarly, a sequential warped product $\left( M_{1}\times _{f}M_{2}\right)
\times _{h}M_{3}$ with a separable function $h:M_{1}\times M_{2}\rightarrow 
\mathbb{R}
$ is equal as a metric space to the iterated warped product manifold.
\end{remark}

\begin{remark}
If the warping function $h$ of the sequential warped product $\left(
M_{1}\times _{f}M_{2}\right) \times _{h}M_{3}$ is defined only on $M_{1}$,
then we have a multiply warped product manifold $M_{1}\times _{f}M_{2}\times
_{h}M_{3}$ with two fibers.
\end{remark}

\begin{remark}
A multiply warped product manifold of the form $M_{1}\times
_{f_{1}}M_{2}\times _{f_{2}}M_{3}$ is the sequential warped product manifold 
$\left( M_{1}\times _{f_{1}}M_{2}\right) \times _{f_{2}}M_{3}$ equipped with
the metric%
\begin{equation*}
g=\left( g_{1}+f_{1}^{2}g_{2}\right) +f_{2}^{2}g_{3}
\end{equation*}%
where both $f_{1}$ and $f_{2}$ are positive functions defined on $M_{1}$.
\end{remark}

Now, we would like to explain how to extend a generalized Robertson-Walker
space-time and a standard static space-time within the framework of
sequential warped products.

Let $(M_{i},g_{i})$ be two $n_{i}-$dimensional Riemannian manifolds for any $%
i=1,2$. Suppose that $I$ is an open, connected subinterval of $\mathbb{R}$
and $dt^{2}$ is the Euclidean metric tensor on $I$. Then

$\bullet $ An $(n_{1}+n_{2}+1)-$ dimensional product manifold $I\times
\left( M_{1}\times M_{2}\right) $ furnished with the metric tensor 
\begin{equation}
\bar{g}=-h^{2}dt^{2}\oplus \left( g_{1}\oplus f^{2}g_{2}\right)  \label{a}
\end{equation}%
is a sequential standard static space-time and is denoted by $\bar{M}%
=I_{h}\times \left( M_{1}\times _{f}M_{2}\right) $ where $h:M_{1}\times
M_{2}\rightarrow (0,\infty )$ and $f:M_{1}\rightarrow (0,\infty )$ are two
smooth functions.

Note that standard static space-times can be considered as a generalization
of the Einstein static universe\cite%
{AD1,AD,GES,Besse2008,Shenawy:2015,Shenawy:2016,Unal:2012}. Obviously, one
can obtain a standard static space-time from a sequential standard static
space-time by taking $M_{2}$ to be a singleton.

$\bullet $ An $(n_{1}+n_{2}+1)-$ dimensional product manifold $\left(
I\times M_{1}\right) \times M_{2}$ furnished with the metric tensor%
\begin{equation}
\bar{g}=-dt^{2}\oplus h^{2}\left( g_{1}\oplus f^{2}g_{2}\right) ,  \label{b}
\end{equation}%
is a sequential generalized Robertson-Walker space-time is denoted by $\bar{M%
}=I\times _{h}\left( M_{1}\times _{f}M_{2}\right) $ where $h:I\rightarrow
(0,\infty )$ and $f:M_{1}\rightarrow (0,\infty )$ are two smooth functions.

Note that generalized Robertson-Walker space-times can be considered as a
generalization of Robertson-Walker space-time \cite{Sanchez99, Sanchez98}.
As in the case of sequential standard static space-times, one can obtain a
generalized Robertson-Walker space-time from a sequential generalized
Robertson-Walker space-time by taking $M_{2}$ to be the empty set a
singleton.

In \cite{Stephani:2003}, there are many exact solutions of Einstein field
equation where the space-time may be written of the form $I\times \left(
M_{1}\times M_{2}\right) $ with metrics of the form (\ref{a}) or (\ref{b}).

Notice also that $\mathbb{S}_{1}^{n}\times F$ or $\mathbb{H}_{1}^{n}\times F$
are standard models in string theory where $F$ is a Calabi-Yau, Ricci flat
Riemannian Manifold and $\mathbb{S}_{1}^{n}$ is the de Sitter and also $%
\mathbb{H}_{1}^{n}$ is the anti-de Sitter manifold both of which are warped
product manifolds (see page 183 of \cite{BEE}). Thus sequential warped
product space-times play important role not only in the theory of general
relativity but also in the string theory.

In this article, we study some geometric concepts such as curvature,
geodesics, Killing vector fields and concircular vector fields on sequential
warped products. In section 2, we derive covariant derivative formulas for
sequential warped product manifolds. Then we derive many curvature formulas
such as Ricci curvature and scalar curvature formulas. In section 3, we
derive a characterization of two disjoint classes of conformal vector fields
on sequential warped product manifolds. In the last section, we apply our
results presented in Section 2 and Section 3, to sequential standard
space-times and generalized Robertson-Walker space-times.

Before we begin to state our main results, we would like to fix notations
used throughout the entire article.

\begin{notation}
Let $\bar{M}=\left( M_{1}\times _{f}M_{2}\right) \times _{h}M_{3}$ be a
sequential warped product manifold with metric $\bar g=\left( g_{1}\oplus
f^{2}g_{2}\right) \oplus h^{2}g_{3}$ where $f \colon M_1 \to (0, \infty)$
and $h \colon M_1 \times M_2 \to (0, \infty).$ Then

\begin{itemize}
\item $M=M_1 \times _{f}M_2$ is a warped product with the metric tensor $g =
g_{1}\oplus f^{2}g_{2}.$

\item $\mathrm{grad}^{1}f$ is the gradient of $f$ on $M_1$ and $\| \mathrm{%
grad}^{1}f \|^2 = g_1(\mathrm{grad}^{1}f, \mathrm{grad}^{1}f).$

\item $\mathrm{grad}h$ is the gradient of $h$ on $M$ and $\Vert \mathrm{grad}%
h\Vert ^{2}=g(\mathrm{grad}h,\mathrm{grad}h)$.

\item The same notation is used to denote a vector field and its lift to the
sequential warped product manifold.
\end{itemize}
\end{notation}

\section{Curvature of Sequential Warped Product Manifolds}

In this section, we will explore the geometry of sequential warped products
of the form $\left( M_{1}\times _{f}M_{2}\right) \times _{h}M_{3}$ by
providing the covariant derivative, curvature tensor, Ricci and scalar
curvature formulas. The proofs that are straightforward can be obtained by
applying similar results on singly warped products twice.

\begin{proposition}
\label{Connection}Let $\bar{M}=\left( M_{1}\times _{f}M_{2}\right) \times
_{h}M_{3}$ be a sequential warped product manifold with metric $\bar{g}%
=\left( g_{1}\oplus f^{2}g_{2}\right) \oplus h^{2}g_{3}$ and also let $%
X_{i},Y_{i}$ $\in \mathfrak{X}(M_{i})$ for any $i=1,2,3.$ Then

\begin{enumerate}
\item $\bar{\nabla}_{X_{1}}Y_{1}=\nabla _{X_{1}}^{1}Y_{1}$

\item $\bar{\nabla}_{X_{1}}X_{2}=\bar{\nabla}_{X_{2}}X_{1}=X_{1}\left( \ln
f\right) X_{2}$

\item $\bar{\nabla}_{X_{2}}Y_{2}=\nabla _{X_{2}}^{2}Y_{2}-fg_{2}\left(
X_{2},Y_{2}\right) \mathrm{grad}^{1}f$

\item $\bar{\nabla}_{X_{3}}X_{1}=\bar{\nabla}_{X_{1}}X_{3}=X_{1}\left( \ln
h\right) X_{3}$

\item $\bar{\nabla}_{X_{2}}X_{3}=\bar{\nabla}_{X_{3}}X_{2}=X_{2}\left( \ln
h\right) X_{3}$

\item $\bar{\nabla}_{X_{3}}Y_{3}=\nabla _{X_{3}}^{3}Y_{3}-hg_{3}\left(
X_{3},Y_{3}\right) \mathrm{grad}h$
\end{enumerate}
\end{proposition}

\begin{proposition}
\label{Curvature}Let $\bar{M}=\left( M_{1}\times _{f}M_{2}\right) \times
_{h}M_{3}$ be a sequential warped product manifold with metric $\bar{g}%
=\left( g_{1}\oplus f^{2}g_{2}\right) \oplus h^{2}g_{3}$ and also let $%
X_{i},Y_{i},Z_{i}$ $\in \mathfrak{X}(M_{i})$ for any $i=1,2,3.$ Then

\begin{enumerate}
\item \textrm{\={R}}$\left( X_{1},Y_{1}\right) Z_{1}=R^{1}\left(
X_{1},Y_{1}\right) Z_{1}$

\item \textrm{\={R}}$\left( X_{2},Y_{2}\right) Z_{2}=R^{2}\left(
X_{2},Y_{2}\right) Z_{2}-\left\Vert \mathrm{grad}^{1}f\right\Vert ^{2}\left[
g_{2}\left( X_{2},Z_{2}\right) Y_{2}-g_{2}\left( Y_{2},Z_{2}\right) X_{2}%
\right] $

\item \textrm{\={R}}$\left( X_{1},Y_{2}\right) Z_{1}=\dfrac{-1}{f}%
H_{1}^{f}\left( X_{1},Z_{1}\right) Y_{2}$

\item \textrm{\={R}}$\left( X_{1},Y_{2}\right) Z_{2}=fg_{2}\left(
Y_{2},Z_{2}\right) \nabla _{X_{1}}^{1}\mathrm{grad}^{1}f$

\item \textrm{\={R}}$\left( X_{1},Y_{2}\right) Z_{3}=0$

\item \textrm{\={R}}$\left( X_{i},Y_{i}\right) Z_{j}=0,i\neq j$

\item \textrm{\={R}}$\left( X_{i},Y_{3}\right) Z_{j}=\dfrac{-1}{h}%
H^{h}\left( X_{i},Z_{j}\right) Y_{3},i,j=1,2$

\item \textrm{\={R}}$\left( X_{i},Y_{3}\right) Z_{3}=hg_{3}\left(
Y_{3},Z_{3}\right) \nabla _{X_{i}}\mathrm{grad}h,i=1,2$

\item \textrm{\={R}}$\left( X_{3},Y_{3}\right) Z_{3}=R^{3}\left(
X_{3},Y_{3}\right) Z_{3}-\left\Vert \mathrm{grad}h\right\Vert ^{2}\left[
g_{3}\left( X_{3},Z_{3}\right) Y_{3}-g_{3}\left( Y_{3},Z_{3}\right) X_{3}%
\right] $
\end{enumerate}
\end{proposition}

Now consider the Ricci curvature denoted by \textrm{\={R}ic} of a sequential
warped product of the form $\left( M_{1}\times _{f}M_{2}\right) \times
_{h}M_{3}$.

\begin{proposition}
\label{Ricci}Let $\bar{M}=\left( M_{1}\times _{f}M_{2}\right) \times
_{h}M_{3}$ be a sequential warped product manifold with metric $\bar{g}%
=\left( g_{1}\oplus f^{2}g_{2}\right) \oplus h^{2}g_{3}$ and also let $%
X_{i},Y_{i},Z_{i}$ $\in \mathfrak{X}(M_{i})$ for any $i=1,2,3.$ Then

\begin{enumerate}
\item \textrm{\={R}ic}$\left( X_{1},Y_{1}\right) =$\textrm{Ric}$^{1}\left(
X_{1},Y_{1}\right) -\dfrac{n_{2}}{f}H_{1}^{f}\left( X_{1},Y_{1}\right) -%
\dfrac{n_{3}}{h}H^{h}\left( X_{1},Y_{1}\right) $

\item \textrm{\={R}ic}$\left( X_{2},Y_{2}\right) =$\textrm{Ric}$^{2}\left(
X_{2},Y_{2}\right) -f^{\sharp }g_{2}\left( X_{2},Y_{2}\right) -\dfrac{n_{3}}{%
h}H^{h}\left( X_{2},Y_{2}\right) $

\item \textrm{\={R}ic}$\left( X_{3},Y_{3}\right) =$\textrm{Ric}$^{3}\left(
X_{3},Y_{3}\right) -h^{\sharp }g_{3}\left( X_{3},Y_{3}\right) $

\item \textrm{\={R}ic}$\left( X_{i},Y_{j}\right) =0,i\neq j$

where $f^{\sharp }=f\Delta ^{1}f+\left( n_{2}-1\right) \left\Vert \mathrm{%
grad}^{1}f\right\Vert ^{2}$ and $h^{\sharp }=h\Delta h+\left( n_{3}-1\right)
\left\Vert \mathrm{grad}h\right\Vert ^{2}$
\end{enumerate}
\end{proposition}

We now apply the last result to establish conditions for a sequential warped
product to be Einstein.

\begin{theorem}
The sequential warped product $\left( M_{1}\times _{f}M_{2}\right) \times
_{h}M_{3}$\ is Einstein with \textrm{\={R}ic}$=\lambda \bar{g}$ if and only
if

\begin{enumerate}
\item \textrm{Ric}$^{1}=\lambda g_{1}+\dfrac{n_{2}}{f}H_{1}^{f}+\dfrac{n_{3}%
}{h}H^{h}$

\item \textrm{Ric}$^{2}=\left( \lambda f^{2}+f^{\sharp}\right) g_{2}+\dfrac{%
n_{3}}{h}H^{h}$

\item $M_{3}$ is Einstein with \textrm{Ric}$^{3}=\left( \lambda
h^{2}+h^{\sharp}\right) g_{3}$.
\end{enumerate}
\end{theorem}

In \cite{Dobarro:1987}, F. Dobarro and E. Lam\'{\i} Dozo established a
relationship between the scalar curvature of a warped product of the form $%
M\times _{f}N$ and that of its base and fiber manifolds $M$ and $N$. In the
following theorem we derive a quite different result for a sequential warped
product manifold.

\begin{theorem}
Let $\bar{M}=\left( M_{1}\times _{f}M_{2}\right) \times _{h}M_{3}$ be a
sequential warped product manifold with metric $\bar{g}=\left( g_{1}\oplus
f^{2}g_{2}\right) \oplus h^{2}g_{3}$ and let $r_{i}$ be the scalar curvature
of $M_{i}$, $i=1,2,3$. Then the scalar curvature $r$ of $\bar{M}$ is given by%
\begin{equation*}
r=r_{1}+\frac{r_{2}}{f^{2}}+\frac{r_{3}}{h^{2}}-\frac{2n_{2}}{f}\Delta ^{1}f-%
\dfrac{2n_{3}}{h}\Delta h-\frac{n_{2}\left( n_{2}-1\right) }{f^{2}}%
\left\Vert \mathrm{grad}^{1}f\right\Vert ^{2}-\frac{n_{3}\left(
n_{3}-1\right) }{h^{2}}\left\Vert \mathrm{grad}h\right\Vert ^{2}
\end{equation*}
\end{theorem}

\begin{proof}
Let $\left\{ e_{1},e_{2},...,e_{n_{1}}\right\} $, $\left\{
e_{n_{1}+1},e_{n_{1}+2},...,e_{n_{1}+n_{2}}\right\} $ and $\left\{
e_{n_{1}+n_{2}+1}+e_{n_{1}+n_{2}+2},...,e_{n}\right\} $ be three frames over 
$M_{1}$, $M_{2}$ and $M_{3}$ respectively. The scalar curvature $r$ of $\bar{%
M}$ is given by%
\begin{eqnarray*}
r &=&\sum_{i=1}^{n_{1}}\mathrm{\bar{R}ic}\left( e_{i},e_{i}\right) +\frac{1}{%
f^{2}}\sum_{i=n_{1}+1}^{n_{1}+n_{2}}\mathrm{\bar{R}ic}\left(
e_{i},e_{i}\right) +\frac{1}{h^{2}}\sum_{i=n_{1}+n_{2}+1}^{n_{1}+n_{2}+n_{3}}%
\mathrm{\bar{R}ic}\left( e_{i},e_{i}\right) \\
&=&r_{1}-\frac{n_{2}}{f}\Delta ^{1}f-\dfrac{n_{3}}{h}\sum_{i=1}^{n_{1}}H^{h}%
\left( e_{i},e_{i}\right) +\frac{1}{f^{2}}r_{2}-\frac{n_{2}}{f^{2}}f^{\sharp
}-\frac{1}{f^{2}}\frac{n_{3}}{h}\sum_{i=n_{1}+1}^{n_{1}+n_{2}}H^{h}\left(
e_{i},e_{i}\right) \\
&&+\frac{1}{h^{2}}\left[ r_{3}-h^{\sharp }n_{3}\right] \\
&=&r_{1}+\frac{1}{f^{2}}r_{2}+\frac{1}{h^{2}}r_{3}-\frac{n_{2}}{f}\Delta
^{1}f-\dfrac{n_{3}}{h}\Delta h-\frac{n_{2}}{f^{2}}f^{\sharp}-\frac{n_{3}}{%
h^{2}}h^{\sharp} \\
&=&r_{1}+\frac{r_{2}}{f^{2}}+\frac{r_{3}}{h^{2}}-\frac{2n_{2}}{f}\Delta
^{1}f-\dfrac{2n_{3}}{h}\Delta h-\frac{n_{2}\left( n_{2}-1\right) }{f^{2}}%
\left\Vert \mathrm{grad}^{1}f\right\Vert ^{2}-\frac{n_{3}\left(
n_{3}-1\right) }{h^{2}}\left\Vert \mathrm{grad}h\right\Vert ^{2}
\end{eqnarray*}
\end{proof}

Suppose that $\bar{M}=\left( M_{1}\times _{f}M_{2}\right) \times _{h}M_{3}$
has a constant sectional curvature $\kappa $. Then the first item of
Proposition (\ref{Curvature}) yields%
\begin{eqnarray*}
\mathrm{\bar{R}}\left( X_{1},Y_{1}\right) Z_{1} &=&\kappa \left\{
g_{1}\left( X_{1},Z_{1}\right) Y_{1}-g_{1}\left( Y_{1},Z_{1}\right)
X_{1}\right\} \\
\mathrm{\bar{R}}\left( X_{1},Y_{1}\right) Z_{1} &=&R^{1}\left(
X_{1},Y_{1}\right) Z_{1}
\end{eqnarray*}%
Thus $M_{1}$ has a constant sectional curvature $\kappa _{1}=\kappa $. The
second item implies that%
\begin{eqnarray*}
\mathrm{\bar{R}}\left( X_{2},Y_{2}\right) Z_{2} &=&\kappa \left\{ g\left(
X_{2},Z_{2}\right) Y_{2}-g\left( Y_{2},Z_{2}\right) X_{2}\right\} \\
&=&\kappa f^{2}\left\{ g_{2}\left( X_{2},Z_{2}\right) Y_{2}-g_{2}\left(
Y_{2},Z_{2}\right) X_{2}\right\} \\
\mathrm{\bar{R}}\left( X_{2},Y_{2}\right) Z_{2} &=&R^{2}\left(
X_{2},Y_{2}\right) Z_{2}-\left\Vert \mathrm{grad}^{1}f\right\Vert
^{2}\left\{ g_{2}\left( X_{2},Z_{2}\right) Y_{2}-g_{2}\left(
Y_{2},Z_{2}\right) X_{2}\right\}
\end{eqnarray*}%
Therefore, Shur's Lemma implies that $M_{2}$ has a constant sectional
curvature $\kappa_2$ given by%
\begin{equation*}
\kappa _{2}=\kappa f^{2}+\left\Vert \mathrm{grad}^{1}f\right\Vert ^{2}
\end{equation*}%
for $n_{2}\geq 3$. Similarly, $M_{3}$ has a constant sectional curvature
curvature $\kappa_3$ given by%
\begin{equation*}
\kappa _{3}=\kappa h^{2}+\left\Vert \mathrm{grad}h\right\Vert ^{2}
\end{equation*}%
for $n_{3}\geq 3$.

\begin{theorem}
Let $\bar{M}=\left( M_{1}\times _{f}M_{2}\right) \times _{h}M_{3}$ be a
sequential warped product manifold with metric $\bar{g}=\left( g_{1}\oplus
f^{2}g_{2}\right) \oplus h^{2}g_{3}$ and let $X_{i},Y_{i},Z_{i}$ $\in 
\mathfrak{X}(M_{i})$ for any $i=1,2,3$. Assume that $\bar{M}$ has a constant
sectional curvature $\kappa $. Then

\begin{enumerate}
\item $M_{1}$ has a costant sectional curvature $\kappa _{1}=\kappa $,

\item $M_{2}$ has a costant sectional curvature $\kappa _{2}=\kappa
f^{2}+\left\Vert \mathrm{grad}^{1}f\right\Vert ^{2}$ for $n_{2}\geq 3$, and

\item $M_{3}$ has a costant sectional curvature $\kappa _{3}=\kappa
h^{2}+\left\Vert \mathrm{grad}h\right\Vert ^{2}$ for $n_{3}\geq 3$.
\end{enumerate}
\end{theorem}

\section{Conformal vector fields}

Conformal vector fields have well-known geometrical and physical
interpretations and have been studied for a long time by geometers and
physicists on Riemannian and pseudo-Riemannian manifolds. Killing vector
fields are conformal vector fields on (pseudo-) Riemannian manifolds that
preserve metric, i.e, under the flow of a Killing vector field the metric
does not change. The set of all Killing vector fields on a connected
Riemannian manifold forms a Lie algebra over the set of real numbers.\cite%
{Berestovskii:2008, Besse2008, Duggala:2005, Ivancevic:2007, Kuhnel:1997,
Operea:2008, Shenawy:2015}

In \cite{Unal:2012}, the authors studied Killing vector fields of warped
product manifolds specially on standard static space-times. They prove some
global characterization of the Killing vector fields of a standard static
space-time. More explicitly, they obtain a form of a Killing vector field on
this class of space-times. Moreover, a characterization of the Killing
vector fields on a standard static space-time with compact Riemannian parts
and many other interesting results are given. In this section, we study the
concept of conformal vector fields on sequential warped product manifolds.

A vector field $\zeta $on a Riemannian manifold $\left( M,g\right) $ is
conformal if%
\begin{equation}
\mathcal{L}_{\zeta }g=\rho g
\end{equation}%
where $\mathcal{L}_{\zeta }$ is the Lie derivative in direction of the
vector field $\zeta $. Moreover, $\zeta $ is called a Killing vector field
if $\rho =0.$ This is equivalent to say that $\zeta $ is Killing if%
\begin{equation}
g\left( \nabla _{X}\zeta ,Y\right) +g\left( X,\nabla _{Y}\zeta \right) =0
\end{equation}%
for any vector fields $X,Y\in \mathfrak{X}\left( M\right) $. By symmetry of
the above equation, $\zeta $ is Killing if%
\begin{equation}
g\left( \nabla _{X}\zeta ,X\right) =0  \label{e1}
\end{equation}%
for any vector field $X\in \mathfrak{X}\left( M\right) $.

From now on $\bar{M}=\left( M_{1}\times _{f}M_{2}\right) \times _{h}M_{3}$
denotes a sequential warped product manifold with metric $\bar g=\left(
g_{1}\oplus f^{2}g_{2}\right) \oplus h^{2}g_{3}.$

\begin{theorem}
A vector field $\zeta \in \mathfrak{X}\left( \left( M_{1}\times
_{f}M_{2}\right) \times _{h}M_{3}\right) $ is Killing if

\begin{enumerate}
\item $\zeta _{i}$ is Killing on $M_{i},$ for every $i=1,2,3$

\item $\zeta _{1}\left( f\right) =0$

\item $\left( \zeta _{1}+\zeta _{2}\right) h=0$
\end{enumerate}
\end{theorem}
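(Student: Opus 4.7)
The plan is to verify the Killing condition in its polarized form $\bar{g}(\bar{\nabla}_X \zeta, X)=0$ from equation~(\ref{e1}). Decompose $\zeta = \zeta_1+\zeta_2+\zeta_3$ and a generic test field as $X = X_1+X_2+X_3$ according to the three factor decompositions, where each $\zeta_i$ and $X_i$ is lifted from $M_i$. Then by bilinearity
\[
\bar{g}(\bar{\nabla}_X \zeta, X)=\sum_{i,j=1}^{3}\bar{g}(\bar{\nabla}_{X_i}\zeta_j, X),
\]
and each of the nine summands $\bar{\nabla}_{X_i}\zeta_j$ is given by the six formulas of Theorem~\ref{P1}. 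The key observation is that $\bar{g}=g_1\oplus f^2 g_2\oplus \bar{f}^2 g_3$ is block diagonal, so a vector supported in a single factor only pairs nontrivially with the $X_k$ belonging to the same factor.

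First I would isolate the three \emph{diagonal} contributions coming from $\bar{\nabla}_{X_i}\zeta_i$, which yield $g_1(\nabla^1_{X_1}\zeta_1, X_1)$, $f^2 g_2(\nabla^2_{X_2}\zeta_2, X_2)$, and $\bar{f}^2 g_3(\nabla^3_{X_3}\zeta_3, X_3)$. Their vanishing for arbitrary $X_i$ is precisely the Killing condition for $\zeta_i$ on $M_i$, which is hypothesis~(1).

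Next I would exhibit the cancellations among the mixed terms. The contribution $\bar{\nabla}_{X_1}\zeta_2 = X_1(\ln f)\zeta_2$ paired with $X_2$ gives $f\, X_1(f)\, g_2(\zeta_2,X_2)$, which cancels against the $M_1$--part of $\bar{\nabla}_{X_2}\zeta_2 = \nabla^2_{X_2}\zeta_2 - fg_2(X_2,\zeta_2)\mathrm{grad}^1 f$ paired with $X_1$, namely $-f g_2(X_2,\zeta_2)\,X_1(f)$. The analogous cancellations between $\bar{\nabla}_{X_1}\zeta_3$ and the $M_1$--projection of $\bar{\nabla}_{X_3}\zeta_3$, and between $\bar{\nabla}_{X_2}\zeta_3$ and the $M_2$--projection of $\bar{\nabla}_{X_3}\zeta_3$, handle the $\bar{f}$--warping symmetrically. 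After all such pairs collapse, the surviving terms are
\[
f\,\zeta_1(f)\, g_2(X_2,X_2) \;+\; \bar{f}\,(\zeta_1+\zeta_2)(\bar{f})\, g_3(X_3,X_3),
\]
originating from $\bar{\nabla}_{X_2}\zeta_1$, $\bar{\nabla}_{X_3}\zeta_1$, and $\bar{\nabla}_{X_3}\zeta_2$. These vanish for all $X_2,X_3$ exactly under hypotheses~(2) and~(3), closing the argument.

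The main bookkeeping obstacle is handling $\mathrm{grad}\bar{f}$ correctly: since $\bar{f}$ is defined on $M_1\times M_2$, its gradient has components in \emph{both} $M_1$ and $M_2$ (with respect to the warped base metric $g_1\oplus f^2 g_2$), and both must be paired against the corresponding $X_1$ and $X_2$. Using the identity $\bar{g}(\mathrm{grad}\bar{f},V) = V(\bar{f})$ for any $V$ tangent to $M_1\times M_2$, these two pairings produce the $X_1(\bar{f})$ and $X_2(\bar{f})$ terms that fuel the cancellations above. Once this is tracked carefully, the total sum is zero for every $X$, so $\zeta$ is Killing and the sufficiency of~(1)--(3) is established.
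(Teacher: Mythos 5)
Your proposal is correct and follows essentially the same route as the paper: expand $\bar{g}(\bar{\nabla}_X\zeta,X)$ into the nine terms $\bar{\nabla}_{X_i}\zeta_j$, substitute the covariant derivative formulas of Theorem~\ref{P1}, use the block-diagonal structure of $\bar{g}$ to reduce to the diagonal Killing terms plus the residual $f\,\zeta_1(f)\,g_2(X_2,X_2)+\bar{f}\,(\zeta_1+\zeta_2)(\bar{f})\,g_3(X_3,X_3)$, and conclude from hypotheses (1)--(3). Your explicit tracking of the pairwise cancellations and of the two components of $\mathrm{grad}\,\bar{f}$ is slightly more detailed than the paper's write-up but is the same argument.
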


\begin{proof}
The vector field $\zeta \in \mathfrak{X}\left( \bar{M}\right) $ is Killing
by equation (\ref{e1}) if and only if%
\begin{equation*}
\bar{g}\left( \bar{\nabla}_{X}\zeta ,X\right) =0
\end{equation*}%
for any vector field $X\in \mathfrak{X}\left( \bar{M}\right) $. It is clear
that%
\begin{eqnarray*}
\bar{g}\left( \bar{\nabla}_{X}\zeta ,X\right) &=&\bar{g}\left( \bar{\nabla}%
_{X_{1}}\zeta _{1}+\bar{\nabla}_{X_{1}}\zeta _{2}+\bar{\nabla}_{X_{1}}\zeta
_{3},X\right) \\
&&+\bar{g}\left( \bar{\nabla}_{X_{2}}\zeta _{1}+\bar{\nabla}_{X_{2}}\zeta
_{2}+\bar{\nabla}_{X_{2}}\zeta _{3},X\right) \\
&&+\bar{g}\left( \bar{\nabla}_{X_{3}}\zeta _{1}+\bar{\nabla}_{X_{3}}\zeta
_{2}+\bar{\nabla}_{X_{3}}\zeta _{3},X\right)
\end{eqnarray*}%
Now using Proposition (\ref{Connection}) we have%
\begin{eqnarray*}
&&\bar{g}\left( \bar{\nabla}_{X}\zeta ,X\right) \\
&=&\bar{g}\left( \nabla _{X_{1}}^{1}\zeta _{1}+X_{1}\left( \ln f\right)
\zeta _{2}+X_{1}\left( \ln h\right) \zeta _{3},X\right) \\
&&+\bar{g}\left( \zeta _{1}\left( \ln f\right) X_{2}+\nabla
_{X_{2}}^{2}\zeta _{2}-fg_{2}\left( \zeta _{2},X_{2}\right) \mathrm{grad}%
^{1}f+X_{2}\left( \ln h\right) \zeta _{3},X\right) \\
&&+\bar{g}\left( \zeta _{1}\left( \ln h\right) X_{3}+\zeta _{2}\left( \ln
h\right) X_{3}+\nabla _{X_{3}}^{3}\zeta _{3}-hg_{3}\left( \zeta
_{3},X_{3}\right) \mathrm{grad}h,X\right) \\
&=&g_{1}\left( \nabla _{X_{1}}^{1}\zeta _{1},X_{1}\right) +f^{2}g_{2}\left(
\nabla _{X_{2}}^{2}\zeta _{2},X_{2}\right) +h^{2}g_{3}\left( \nabla
_{X_{3}}^{3}\zeta _{3},X_{3}\right) \\
&&+f\zeta _{1}\left( f\right) g_{2}\left( X_{2},X_{2}\right) +h\left( \zeta
_{1}+\zeta _{2}\right) \left( h\right) g_{3}\left( X_{3},X_{3}\right)
\end{eqnarray*}%
From this equation one can easily deduce the result.
\end{proof}

The following result will enable us to discuss the converse of the above
result.

\begin{proposition}
A vector field $\zeta \in \mathfrak{X}\left( \left( M_{1}\times
_{f}M_{2}\right) \times _{h}M_{3}\right) $ satisfies%
\begin{eqnarray}
\left( \mathcal{L}_{\zeta }g\right) \left( X,Y\right) &=&\left( \mathcal{L}%
_{\zeta _{1}}^{1}g_{1}\right) \left( X_{1},Y_{1}\right) +f^{2}\left( 
\mathcal{L}_{\zeta _{2}}^{2}g_{2}\right) \left( X_{2},Y_{2}\right)
+h^{2}\left( \mathcal{L}_{\zeta _{3}}^{3}g_{3}\right) \left(
X_{3},Y_{3}\right)  \notag \\
&&+2f\zeta _{1}\left( f\right) g_{2}\left( X_{2},Y_{2}\right) +2h\left(
\zeta _{1}+\zeta _{2}\right) \left( h\right) g_{3}\left( X_{3},Y_{3}\right)
\label{e4}
\end{eqnarray}%
for any vector fields $X,Y\in \mathfrak{X}\left( \left( M_{1}\times
_{f}M_{2}\right) \times _{h}M_{3}\right) $.
\end{proposition}

\begin{theorem}
Let $\zeta \in \mathfrak{X}\left( \left( M_{1}\times _{f}M_{2}\right) \times
_{h}M_{3}\right) $ be a Killing vector field. Then

\begin{enumerate}
\item $\zeta _{1}$ is Killing on $M_{1}$,

\item $\zeta _{2}$ is conformal on $M_{2}$ with conformal factor $-2\zeta
_{1}\left( \ln f\right) $,

\item $\zeta _{3}$ is conformal on $M_{3}$ with conformal factor $-2\left(
\zeta _{1}+\zeta _{2}\right) \left( \ln h\right) $.
\end{enumerate}
\end{theorem}

\begin{proof}
Consider equation (\ref{e4}). We have the following cases. By substituting $%
X=X_{1}$ and $Y=Y_{1},$ we obtain%
\begin{equation*}
\left( \mathcal{L}_{\zeta _{1}}^{1}g_{1}\right) \left( X_{1},Y_{1}\right) =0
\end{equation*}%
and thus $\zeta _{1}$ is Killing. Now, let $X=X_{2}$ and $Y=Y_{2},$ be then
we have 
\begin{eqnarray*}
0 &=&f^{2}\left( \mathcal{L}_{\zeta _{2}}^{2}g_{2}\right) \left(
X_{2},Y_{2}\right) +2f\zeta _{1}\left( f\right) g_{2}\left(
X_{2},Y_{2}\right) \\
\left( \mathcal{L}_{\zeta _{2}}^{2}g_{2}\right) \left( X_{2},Y_{2}\right)
&=&-2\zeta _{1}\left( \ln f\right) g_{2}\left( X_{2},Y_{2}\right)
\end{eqnarray*}%
and thus $\zeta _{2}$ is conformal. Finally, if $X=X_{3}$ and $Y=Y_{3}$, then%
\begin{eqnarray*}
0 &=&h^{2}\left( \mathcal{L}_{\zeta _{3}}^{3}g_{3}\right) \left(
X_{3},Y_{3}\right) +2h\left( \zeta _{1}+\zeta _{2}\right) \left( h\right)
g_{3}\left( X_{3},Y_{3}\right) \\
\left( \mathcal{L}_{\zeta _{3}}^{3}g_{3}\right) \left( X_{3},Y_{3}\right)
&=&-2\left( \zeta _{1}+\zeta _{2}\right) \left( \ln h\right) g_{3}\left(
X_{3},Y_{3}\right)
\end{eqnarray*}%
and thus $\zeta _{3}$ is conformal.
\end{proof}

\begin{theorem}
Let $\zeta \in \mathfrak{X}\left( \left( M_{1}\times _{f}M_{2}\right) \times
_{h}M_{3}\right) $ be a vector field on a sequential warped product
manifold. Assume that

\begin{enumerate}
\item $\zeta _{i}$ is conformal on $M_{i}$ with factor $\rho _{i}$ for each $%
i$,

\item $\rho _{1}=\rho _{2}+2\zeta _{1}\left( \ln f\right) $,

\item $\rho _{1}=\rho _{3}+2\left( \zeta _{1}+\zeta _{2}\right) \left( \ln
h\right) $.
\end{enumerate}

Then $\zeta $ is conformal on $\bar{M}$.
\end{theorem}

Now, we will study the geodesic curves and their equations on a sequential
warped product. In a sequential warped product of the form $\left(
M_{1}\times _{f}M_{2}\right) \times _{h}M_{3}$, as product manifold, a curve 
$\alpha \left( t\right) $ can be written as $\alpha \left( t\right) =\left(
\alpha _{1}\left( t\right) ,\alpha _{2}\left( t\right) ,\alpha _{3}\left(
t\right) \right) $ with $\alpha _{i}\left( t\right) $ the projections of $%
\alpha $ into $M_{i}$ for any $i=1,2,3$.

\begin{lemma}
\label{geodesic}Let $\alpha \left( t\right) =\left( \alpha _{1}\left(
t\right) ,\alpha _{2}\left( t\right) ,\alpha _{3}\left( t\right) \right) $
be a smooth curve on a sequential warped product of the form $\bar{M}=\left(
M_{1}\times _{f}M_{2}\right) \times _{h}M_{3}$ with metric $\bar{g}=\left(
g_{1}\oplus f^{2}g_{2}\right) \oplus h^{2}g_{3}$. Then $\alpha $ is a
geodesic in $\bar{M}$ if and only if

\begin{enumerate}
\item $\nabla _{\dot{\alpha}_{1}}^{1}\dot{\alpha}_{1}=f\left\Vert \dot{\alpha%
}_{2}\right\Vert _{2}^{2}\mathrm{grad}^{1}f+h\left\Vert \dot{\alpha}%
_{3}\right\Vert _{3}^{2}\left( \mathrm{grad}h\right) ^{T}$ on $M_{1}$

\item $\nabla _{\dot{\alpha}_{2}}^{2}\dot{\alpha}_{2}=-2\dot{\alpha}%
_{1}\left( \ln f\right) \dot{\alpha}_{2}+h\left\Vert \dot{\alpha}%
_{3}\right\Vert _{3}^{2}\left( \mathrm{grad}h\right) ^{\perp }$ on $M_{2}$

\item $\nabla _{\dot{\alpha}_{3}}^{3}\dot{\alpha}_{3}=-2\dot{\alpha}%
_{1}\left( \ln h\right) \dot{\alpha}_{3}-2\dot{\alpha}_{2}\left( \ln
h\right) \dot{\alpha}_{3}$ on $M_{3}$
\end{enumerate}
\end{lemma}

\begin{proof}
Then $\alpha _{i}\left( t\right) $ is regular hence we can suppose $\alpha
_{i}\left( t\right) $ is an integral curve of $\dot{\alpha}_{i}$ on $M_{i}$
and so $\alpha \left( t\right) $ is an integral curve of $\dot{\alpha}=\dot{%
\alpha}_{1}+\dot{\alpha}_{2}+\dot{\alpha}_{3}$. Thus%
\begin{eqnarray*}
\bar{\nabla}_{\dot{\alpha}}\dot{\alpha} &=&\bar{\nabla}_{\dot{\alpha}_{1}}%
\dot{\alpha}_{1}+\bar{\nabla}_{\dot{\alpha}_{1}}\dot{\alpha}_{2}+\bar{\nabla}%
_{\dot{\alpha}_{1}}\dot{\alpha}_{3} \\
&&+\bar{\nabla}_{\dot{\alpha}_{2}}\dot{\alpha}_{1}+\bar{\nabla}_{\dot{\alpha}%
_{2}}\dot{\alpha}_{2}+\bar{\nabla}_{\dot{\alpha}_{2}}\dot{\alpha}_{3} \\
&&+\bar{\nabla}_{\dot{\alpha}_{3}}\dot{\alpha}_{1}+\bar{\nabla}_{\dot{\alpha}%
_{3}}\dot{\alpha}_{2}+\bar{\nabla}_{\dot{\alpha}_{3}}\dot{\alpha}_{3}
\end{eqnarray*}%
Now we apply Proposition (\ref{Connection}) to get%
\begin{eqnarray*}
\bar{\nabla}_{\dot{\alpha}}\dot{\alpha} &=&\nabla _{\dot{\alpha}_{1}}^{1}%
\dot{\alpha}_{1}+2\dot{\alpha}_{1}\left( \ln f\right) \dot{\alpha}_{2}+2\dot{%
\alpha}_{1}\left( \ln h\right) \dot{\alpha}_{3} \\
&&+2\dot{\alpha}_{2}\left( \ln h\right) \dot{\alpha}_{3}+\nabla _{\dot{\alpha%
}_{2}}^{2}\dot{\alpha}_{2}-fg_{2}\left( \dot{\alpha}_{2},\dot{\alpha}%
_{2}\right) \mathrm{grad}^{1}f \\
&&+\nabla _{\dot{\alpha}_{3}}^{3}\dot{\alpha}_{3}-hg_{3}\left( \dot{\alpha}%
_{3},\dot{\alpha}_{3}\right) \mathrm{grad}h
\end{eqnarray*}%
This equation implies that%
\begin{eqnarray*}
\bar{\nabla}_{\dot{\alpha}}\dot{\alpha} &=&\nabla _{\dot{\alpha}_{1}}^{1}%
\dot{\alpha}_{1}-fg_{2}\left( \dot{\alpha}_{2},\dot{\alpha}_{2}\right) 
\mathrm{grad}^{1}f-hg_{3}\left( \dot{\alpha}_{3},\dot{\alpha}_{3}\right)
\left( \mathrm{grad}h\right) ^{T} \\
&&+\nabla _{\dot{\alpha}_{2}}^{2}\dot{\alpha}_{2}+2\dot{\alpha}_{1}\left(
\ln f\right) \dot{\alpha}_{2}-hg_{3}\left( \dot{\alpha}_{3},\dot{\alpha}%
_{3}\right) \left( \mathrm{grad}h\right) ^{\perp } \\
&&+\nabla _{\dot{\alpha}_{3}}^{3}\dot{\alpha}_{3}+2\dot{\alpha}_{1}\left(
\ln h\right) \dot{\alpha}_{3}+2\dot{\alpha}_{2}\left( \ln h\right) \dot{%
\alpha}_{3}
\end{eqnarray*}
\end{proof}

\begin{theorem}
Let $\zeta \in \mathfrak{X}\left( \left( M_{1}\times _{f}M_{2}\right) \times
_{h}M_{3}\right) $ be a Killing vector field. Then $g\left( \zeta ,X\right) $
is constant along the integral curve $\alpha \left( t\right) =\left( \alpha
_{1}\left( t\right) ,\alpha _{2}\left( t\right) ,\alpha _{3}\left( t\right)
\right) $ of $X$ if

\begin{enumerate}
\item $\nabla _{X_{1}}^{1}X_{1}=f\left\Vert \dot{\alpha}_{2}\right\Vert
_{2}^{2}\mathrm{grad}^{1}f+h\left\Vert \dot{\alpha}_{3}\right\Vert
_{3}^{2}\left( \mathrm{grad}h\right) ^{T}$ on $M_{1}$

\item $\nabla _{X_{2}}^{2}X_{2}=-2X_{1}\left( \ln f\right) X_{2}+h\left\Vert 
\dot{\alpha}_{3}\right\Vert _{3}^{2}\left( \mathrm{grad}h\right) ^{\perp }$
on $M_{2}$

\item $\nabla _{X_{3}}^{3}X_{3}=-2X_{1}\left( \ln h\right)
X_{3}-2X_{2}\left( \ln h\right) X_{3}$ on $M_{3}.$
\end{enumerate}
\end{theorem}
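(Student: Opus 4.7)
The plan is to recognize that the three hypotheses of the theorem are exactly the conditions derived in Theorem \ref{TH1} characterizing when the integral curve $\alpha(t)$ of $X$ is a geodesic on $\bar{M}=\left( M_{1}\times _{f}M_{2}\right) \times _{\bar{f}}M_{3}$. Thus, under these hypotheses, $\bar{\nabla}_{X}X=0$ along $\alpha$. The statement then reduces to the well-known conservation law for Killing vector fields along geodesics.

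Concretely, I would start by computing the derivative of $g(\zeta,X)$ along $\alpha$. Since $X$ is the velocity field of $\alpha$,
\begin{equation*}
\frac{d}{dt}\,\bar{g}(\zeta,X) = X\bigl(\bar{g}(\zeta,X)\bigr) = \bar{g}\bigl(\bar{\nabla}_{X}\zeta,X\bigr)+\bar{g}\bigl(\zeta,\bar{\nabla}_{X}X\bigr),
\end{equation*}
where the second equality uses metric compatibility of $\bar{\nabla}$.

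The first term on the right vanishes because $\zeta$ is Killing: by equation (\ref{e1}), $\bar{g}(\bar{\nabla}_{X}\zeta,X)=0$ for every vector field $X$, in particular along $\alpha$. The second term vanishes because the three hypotheses match the geodesic characterization of Theorem \ref{TH1} verbatim (with $T_i$ replaced by $X_i$), so $\alpha$ is a geodesic of $\bar{M}$ and hence $\bar{\nabla}_{X}X=0$. Combining, $\frac{d}{dt}\bar{g}(\zeta,X)=0$ along $\alpha$, so $\bar{g}(\zeta,X)$ is constant along the integral curve.

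There is no real obstacle here: the main conceptual point is simply observing that the three listed conditions are precisely the geodesic equations on the sequential warped product. The rest is the standard two-line argument using the Killing identity and metric compatibility, together with Theorem \ref{TH1}.
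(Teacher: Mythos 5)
Your proposal is correct and follows exactly the paper's own argument: identify conditions (1)--(3) with the geodesic equations of Theorem \ref{TH1} so that $\bar{\nabla}_{X}X=0$, then use metric compatibility together with the Killing identity (the paper writes $g(\nabla_{X}\zeta,X)=\tfrac{1}{2}(\mathcal{L}_{\zeta}g)(X,X)=0$, which is the same as your appeal to equation (\ref{e1})) to conclude that $X\,g(\zeta,X)=0$. There is no substantive difference between the two proofs.
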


\begin{proof}
The conditions (1-3) imply that $\alpha \left( t\right) $ is a geodesic and
so $\nabla _{X}X=0$ (see Lemma \ref{geodesic}). Thus $g\left( \zeta
,X\right) $ is constant along the integral curve of $X$.
\end{proof}

A vector field $\zeta $ on a Riemannian manifold $M$ is called concircular
vector field if%
\begin{equation*}
\nabla _{X}\zeta =\mu X
\end{equation*}%
for any vector field $X$ where $\mu $ is function defined on $M$. It is
clear that%
\begin{equation*}
\left( \mathcal{L}_{\zeta }g\right) \left( X,Y\right) =2\mu g\left(
X,Y\right)
\end{equation*}%
i.e. any concircular vector field is a conformal vector field. Concircular
vector fields have many applications in geometry and physics\cite{Chen2014}.
A concircular vector field is sometimes called a closed conformal vector
field.

\begin{theorem}
Let $\zeta \in \mathfrak{X}\left( \left( M_{1}\times _{f}M_{2}\right) \times
_{h}M_{3}\right) $ be a concircular vector field on $\bar{M}=\left(
M_{1}\times _{f}M_{2}\right) \times _{h}M_{3}$. Then each $\zeta _{i}$ is a
non-zero concircular vector field on $M_{i}$ for any $i=1,2,3$ if and only
if both $f$ and $h$ are constant functions.
\end{theorem}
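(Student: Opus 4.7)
The plan is to expand the identity $\bar\nabla_X\zeta=\mu X$ for the decomposition $\zeta=\zeta_1+\zeta_2+\zeta_3$, one factor of $X$ at a time, using the six covariant derivative formulas of Theorem \ref{P1}, and then to split the result into its $TM_1$-, $TM_2$-, and $TM_3$-components. Taking $X=X_1$ and invoking formulas (1), (2), (4) gives
\begin{equation*}
\nabla^1_{X_1}\zeta_1+X_1(\ln f)\zeta_2+X_1(\ln\bar f)\zeta_3=\mu X_1,
\end{equation*}
which, after projecting onto the three factors, yields
\begin{equation*}
\nabla^1_{X_1}\zeta_1=\mu X_1,\qquad X_1(\ln f)\zeta_2=0,\qquad X_1(\ln\bar f)\zeta_3=0.
\end{equation*}
Taking $X=X_2$ via (2), (3), (5) produces an analogous identity whose three projections read $g_2(X_2,\zeta_2)\,\mathrm{grad}^1 f=0$, $\nabla^2_{X_2}\zeta_2=(\mu-\zeta_1(\ln f))X_2$, and $X_2(\ln\bar f)\zeta_3=0$; the case $X=X_3$ via (4), (5), (6) gives a parallel triple involving $\mathrm{grad}\bar f$ and $(\zeta_1+\zeta_2)(\ln\bar f)$.

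For the forward direction ($\Rightarrow$) I would exploit nontriviality of the components. Since $\zeta_2\not\equiv 0$, the relation $X_1(\ln f)\zeta_2=0$ forces $X_1(\ln f)=0$ for every $X_1\in\mathfrak{X}(M_1)$, so $f$ is constant on $M_1$; likewise $\zeta_3\not\equiv 0$ together with $X_1(\ln\bar f)\zeta_3=0$ and $X_2(\ln\bar f)\zeta_3=0$ (and the fact that $\bar f$ is independent of $M_3$) forces $\bar f$ to be constant on $M_1\times M_2$. For the converse, once $f$ and $\bar f$ are constant every warping-related term in the expansions drops out, and the three equations collapse to $\nabla^i_{X_i}\zeta_i=\mu X_i$ on $M_i$, exhibiting each $\zeta_i$ as a concircular vector field on $M_i$.

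The main subtlety I anticipate is the status of the conformal factor $\mu$: each of the three identities $\nabla^i_{X_i}\zeta_i=\mu X_i$ forces $\mu$ to depend only on the $M_i$-coordinate, so after combining them $\mu$ must be (locally) constant and serves as a common conformal factor for the three restricted fields. A secondary bookkeeping point is the pair of ``off-diagonal'' vanishing conditions $g_2(X_2,\zeta_2)\,\mathrm{grad}^1 f=0$ and $g_3(X_3,\zeta_3)\,\mathrm{grad}\bar f=0$: in the forward direction these are automatically implied by the stronger conclusions $\mathrm{grad}^1 f=0$ and $\mathrm{grad}\bar f=0$, while in the converse they hold trivially, so they contribute no new information in either direction.
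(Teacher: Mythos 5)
Your proposal is correct and follows essentially the same route as the paper: expanding $\bar{\nabla}_{X}\zeta =\mu X$ via Theorem \ref{P1} and projecting onto the three tangent distributions, then using the non-vanishing of $\zeta _{2}$ and $\zeta _{3}$ to kill the gradient terms in one direction and the constancy of $f,\bar{f}$ to collapse the system in the other. The only difference is organizational (you specialize $X$ to one factor at a time rather than writing the full coupled system), and your side remark that $\mu $ is then forced to be constant is a small extra observation the paper does not make.
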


\begin{proof}
Using the definition of concircular vector fields and Theorem \ref%
{Connection}, we obtain that%
\begin{eqnarray*}
\nabla _{X}\zeta &=&\nabla _{X_{1}}\zeta _{1}+\nabla _{X_{1}}\zeta
_{2}+\nabla _{X_{1}}\zeta _{3}+\nabla _{X_{2}}\zeta _{1}+\nabla
_{X_{2}}\zeta _{2}+\nabla _{X_{2}}\zeta _{3}+\nabla _{X_{3}}\zeta
_{1}+\nabla _{X_{3}}\zeta _{2}+\nabla _{X_{3}}\zeta _{3} \\
\mu X &=&\nabla _{X_{1}}^{1}\zeta _{1}+X_{1}\left( f\right) \zeta
_{2}+X_{1}\left( h\right) \zeta _{3}+\zeta _{1}\left( f\right) X_{2}+\nabla
_{X_{2}}^{2}\zeta _{2}-fg_{2}\left( X_{2},\zeta _{2}\right) \mathrm{grad}%
^{1}f \\
&&+X_{2}\left( h\right) \zeta _{3}+\zeta _{1}\left( h\right) X_{3}+\zeta
_{2}\left( h\right) X_{3}+\nabla _{X_{3}}^{3}\zeta _{3}-hg_{3}\left(
X_{3},\zeta _{3}\right) \mathrm{grad}h
\end{eqnarray*}%
Suppose that both $f$ and $h$ are constant functions, then 
\begin{eqnarray}
\nabla _{X_{1}}^{1}\zeta _{1}-fg_{2}\left( X_{2},\zeta _{2}\right) \mathrm{%
grad}^{1}f-hg_{3}\left( X_{3},\zeta _{3}\right) \left( \mathrm{grad}h\right)
^{T} &=&\mu X_{1}  \notag \\
\nabla _{X_{2}}^{2}\zeta _{2}+X_{1}\left( f\right) \zeta _{2}+\zeta
_{1}\left( f\right) X_{2}-hg_{3}\left( X_{3},\zeta _{3}\right) \left( 
\mathrm{grad}h\right) ^{\perp } &=&\mu X_{2}  \label{a1} \\
\nabla _{X_{3}}^{3}\zeta _{3}+X_{1}\left( h\right) \zeta _{3}+X_{2}\left(
h\right) \zeta _{3}+\zeta _{1}\left( h\right) X_{3}+\zeta _{2}\left(
h\right) X_{3} &=&\mu X_{3}  \notag
\end{eqnarray}%
Now, suppose that both $f$ and $h$ are constant functions, then%
\begin{eqnarray*}
\nabla _{X_{1}}^{1}\zeta _{1} &=&\mu X_{1} \\
\nabla _{X_{2}}^{2}\zeta _{2} &=&\mu X_{2} \\
\nabla _{X_{3}}^{3}\zeta _{3} &=&\mu X_{3}
\end{eqnarray*}%
i.e., each $\zeta _{i}$ is concircular on $M_{i}$ for $i=1,2,3$. Conversely,
we suppose that 
\begin{eqnarray*}
\nabla _{X_{1}}^{1}\zeta _{1} &=&\mu _{1}X_{1} \\
\nabla _{X_{2}}^{2}\zeta _{2} &=&\mu _{2}X_{2} \\
\nabla _{X_{3}}^{3}\zeta _{3} &=&\mu _{3}X_{3}
\end{eqnarray*}

Hence Equation \ref{a1} becomes%
\begin{eqnarray*}
\mu _{1}X_{1}-fg_{2}\left( X_{2},\zeta _{2}\right) \mathrm{grad}%
^{1}f-hg_{3}\left( X_{3},\zeta _{3}\right) \left( \mathrm{grad}h\right) ^{T}
&=&\mu X_{1} \\
\mu _{2}X_{2}+X_{1}\left( f\right) \zeta _{2}+\zeta _{1}\left( f\right)
X_{2}-hg_{3}\left( X_{3},\zeta _{3}\right) \left( \mathrm{grad}h\right)
^{\perp } &=&\mu X_{2} \\
\mu _{3}X_{3}+X_{1}\left( h\right) \zeta _{3}+X_{2}\left( h\right) \zeta
_{3}+\zeta _{1}\left( h\right) X_{3}+\zeta _{2}\left( h\right) X_{3} &=&\mu
X_{3}
\end{eqnarray*}%
\begin{eqnarray}
\bar{\mu}_{1}X_{1}-fg_{2}\left( X_{2},\zeta _{2}\right) \mathrm{grad}%
^{1}f-hg_{3}\left( X_{3},\zeta _{3}\right) \left( \mathrm{grad}h\right) ^{T}
&=&0  \label{b1} \\
\bar{\mu}_{2}X_{2}+X_{1}\left( f\right) \zeta _{2}+\zeta _{1}\left( f\right)
X_{2}-hg_{3}\left( X_{3},\zeta _{3}\right) \left( \mathrm{grad}h\right)
^{\perp } &=&0  \label{b2} \\
\bar{\mu}_{3}X_{3}+X_{1}\left( h\right) \zeta _{3}+X_{2}\left( h\right)
\zeta _{3}+\zeta _{1}\left( h\right) X_{3}+\zeta _{2}\left( h\right) X_{3}
&=&0  \label{b3}
\end{eqnarray}%
These equations must be satisfied by any arbitrary vector field $X$. Let us
put $X_{3}=0$ in Equation \ref{b3}, then%
\begin{equation*}
\left( X_{1}+X_{2}\right) \left( h\right) \zeta _{3}=0
\end{equation*}%
Since $\zeta _{3}$ does not vanish, $\left( X_{1}+X_{2}\right) \left(
h\right) =0$ for any vector field $X_{1}+X_{2}$ and so $h$ is constant. Now,
Equations \ref{b1} and \ref{b2} become%
\begin{eqnarray*}
\bar{\mu}_{1}X_{1}-fg_{2}\left( X_{2},\zeta _{2}\right) \mathrm{grad}^{1}f
&=&0 \\
\bar{\mu}_{2}X_{2}+X_{1}\left( f\right) \zeta _{2}+\zeta _{1}\left( f\right)
X_{2} &=&0
\end{eqnarray*}%
Similarly, we can prove that $f$ is constant.
\end{proof}

The converse of the above result is considered in the following theorem.

\begin{theorem}
A vector field $\zeta =\zeta _{1}\in \mathfrak{X}\left( \left( M_{1}\times
_{f}M_{2}\right) \times _{h}M_{3}\right) $ is a concircular vector field if $%
\zeta _{1}$ is a concircular vector field with factor $\mu _{1}=\zeta
_{1}\left( \ln f\right) =\zeta _{1}\left( \ln h\right) $.
\end{theorem}

\section{Geometry of Sequential Warped Product Spacetimes}

We will state basic geometric formulas of two types sequential warped
product space-times, namely sequential generalized Robertson-Walker and
sequential standard static space-times. These results can be obtained by
direct applications of the results presented in Section 2.

\subsection{Sequential Generalized Robertson-Walker Space-times}

\begin{proposition}
\label{P2}Let $\bar{M}=\left( I\times _{f}M_{2}\right) \times _{h}M_{3}$ be
a sequential generalized Robertson-Walker space-time with metric $g=\left(
-dt^{2}\oplus f^{2}g_{2}\right) \oplus h^{2}g_{3}$ and also let $%
X_{i},Y_{i}\in \mathfrak{X}(M_{i})$ for any $i=2,3$. Then

\begin{enumerate}
\item $\bar{\nabla}_{\partial _{t}}\partial _{t}=0$

\item $\bar{\nabla}_{\partial _{t}}X_{i}=\bar{\nabla}_{X_{i}}\partial _{t}=%
\frac{\dot{f}}{f}X_{i},i=2,3$

\item $\bar{\nabla}_{X_{2}}Y_{2}=\nabla _{X_{2}}^{2}Y_{2}-f\dot{f}%
g_{2}\left( X_{2},Y_{2}\right) \partial _{t}$

\item $\bar{\nabla}_{X_{2}}X_{3}=\bar{\nabla}_{X_{3}}X_{2}=X_{2}\left( \ln
h\right) X_{3}$

\item $\bar{\nabla}_{X_{3}}Y_{3}=\nabla _{X_{3}}^{3}Y_{3}-hg_{3}\left(
X_{3},Y_{3}\right) \mathrm{grad}h$
\end{enumerate}
\end{proposition}

\begin{proposition}
Let $\bar{M}=\left( I\times _{f}M_{2}\right) \times _{h}M_{3}$ be a
sequential generalized Robertson-Walker spacetime with metric $\bar{g}%
=\left( -dt^{2}\oplus f^{2}g_{2}\right) \oplus h^{2}g_{3}$ and also let $%
X_{i},Y_{i},Z_{i}$ $\in \mathfrak{X}(M_{i})$. Then

\begin{enumerate}
\item \textrm{\={R}}$\left( \partial _{t},\partial _{t}\right) \partial
_{t}= $\textrm{\={R}}$\left( \partial _{t},\partial _{t}\right) Z_{j}=$%
\textrm{\={R}}$\left( X_{i},Y_{i}\right) Z_{j}=$\textrm{\={R}}$\left(
\partial _{t},Y_{2}\right) Z_{3}=0,i\neq j$

\item \textrm{\={R}}$\left( X_{2},Y_{2}\right) Z_{2}=R^{2}\left(
X_{2},Y_{2}\right) Z_{2}+\dot{f}^{2}\left[ g_{2}\left( X_{2},Y_{2}\right)
Y_{2}-g_{2}\left( Z_{2},Y_{2}\right) X_{2}\right] $,

\item \textrm{\={R}}$\left( \partial _{t},Y_{2}\right) \partial _{t}=\frac{%
\ddot{f}}{f}Y_{2}$,

\item \textrm{\={R}}$\left( \partial _{t},Y_{3}\right) \partial _{t}=\dfrac{1%
}{\bar{f}}\frac{\partial ^{2}h}{\partial t^{2}}Y_{3}$, $i,j=1,2$

\item \textrm{\={R}}$\left( \partial _{t},Y_{2}\right) Z_{2}=f\ddot{f}%
g_{2}\left( Y_{2},Z_{2}\right) \partial _{t}$

\item \textrm{\={R}}$\left( X_{2},Y_{3}\right) Z_{2}=\dfrac{-1}{h}%
H^{h}\left( X_{2},Z_{2}\right) Y_{3}$,

\item \textrm{\={R}}$\left( \partial _{t},Y_{3}\right) Z_{3}=hg_{3}\left(
Y_{3},Z_{3}\right) \nabla _{\partial _{t}}\mathrm{grad}h$,

\item \textrm{\={R}}$\left( X_{2},Y_{3}\right) Z_{3}=hg_{3}\left(
Y_{3},Z_{3}\right) \nabla _{X_{2}}\mathrm{grad}h$

\item \textrm{\={R}}$\left( X_{3},Y_{3}\right) Z_{3}=R^{3}\left(
X_{3},Y_{3}\right) Z_{3}-\left\Vert \mathrm{grad}h\right\Vert ^{2}\left[
g_{3}\left( X_{3},Y_{3}\right) Y_{3}-g_{3}\left( Z_{3},Y_{3}\right) X_{3}%
\right] $
\end{enumerate}
\end{proposition}

Now we consider the Ricci curvature \textrm{\={R}ic} of a sequential
generalized Robertson-Walker spacetime of the form $\bar{M}=\left( I\times
_{f}M_{2}\right) \times _{h}M_{3}.$

\begin{proposition}
Let $\bar{M}=\left( I\times _{f}M_{2}\right) \times _{h}M_{3}$ be a
sequential GRW spacetime with metric $\bar{g}=\left( -dt^{2}\oplus
f^{2}g_{2}\right) \oplus h^{2}g_{3}$ and also let $X_{i},Y_{i},Z_{i}$ $\in 
\mathfrak{X}(M_{i})$. Then

\begin{enumerate}
\item \textrm{\={R}ic}$\left( \partial _{t},\partial _{t}\right) =\dfrac{%
n_{2}}{f}\ddot{f}+\dfrac{n_{3}}{h}\frac{\partial ^{2}h}{\partial t^{2}}$

\item \textrm{\={R}ic}$\left( X_{2},Y_{2}\right) =$\textrm{Ric}$^{2}\left(
X_{2},Y_{2}\right) -g_{2}\left( X_{2},Y_{2}\right) f^{\sharp }-\dfrac{n_{3}}{%
h}H^{h}\left( X_{2},Y_{2}\right) $

\item \textrm{\={R}ic}$\left( X_{3},Y_{3}\right) =$\textrm{Ric}$^{3}\left(
X_{3},Y_{3}\right) -g_{3}\left( X_{3},Y_{3}\right) h^{\sharp }$

\item \textrm{\={R}ic}$\left( X_{i},Y_{j}\right) =0,i\neq j$
\end{enumerate}

where $f^{\sharp }=-f\ddot{f}-\left( n_{2}-1\right) \dot{f}^{2}$ and $%
h^{\sharp }=h\Delta h+\left( n_{3}-1\right) \left\Vert \mathrm{grad}%
h\right\Vert ^{2}$
\end{proposition}

A sequential GRW space-time $\bar{M}=\left( I\times _{f}M_{2}\right) \times
_{h}M_{3}$ is Einstein if%
\begin{equation*}
\mathrm{\bar{R}ic}\left( X,Y\right) =\mu \bar{g}\left( X,Y\right)
\end{equation*}%
We have the following cases. The first case is%
\begin{eqnarray*}
\mathrm{\bar{R}ic}\left( \partial _{t},\partial _{t}\right) &=&\mu \bar{g}%
\left( \partial _{t},\partial _{t}\right) \\
\dfrac{n_{2}}{f}\ddot{f}+\dfrac{n_{3}}{h}\frac{\partial ^{2}h}{\partial t^{2}%
} &=&-\mu
\end{eqnarray*}%
and the second case is%
\begin{equation*}
\mathrm{Ric}^{2}\left( X_{2},Y_{2}\right) -g_{2}\left( X_{2},Y_{2}\right)
f^{\sharp }-\dfrac{n_{3}}{h}H^{h}\left( X_{2},Y_{2}\right) =\mu
f^{2}g_{2}\left( X_{2},Y_{2}\right)
\end{equation*}%
and so%
\begin{equation*}
\mathrm{Ric}^{2}\left( X_{2},Y_{2}\right) =\dfrac{n_{3}}{h}H^{h}\left(
X_{2},Y_{2}\right) +\left( \mu f^{2}+f^{\sharp }\right) g_{2}\left(
X_{2},Y_{2}\right)
\end{equation*}%
and finally we have%
\begin{equation*}
\mathrm{Ric}^{3}\left( X_{3},Y_{3}\right) =\left( \mu h^{2}+h^{\sharp
}\right) g_{3}\left( X_{3},Y_{3}\right)
\end{equation*}

\begin{theorem}
Let $\bar{M}=\left( I\times _{f}M_{2}\right) \times _{h}M_{3}$ be an
Einstein sequential GRW space-time with metric $\bar{g}=\left( -dt^{2}\oplus
f^{2}g_{2}\right) \oplus h^{2}g_{3}$. Then,

\begin{enumerate}
\item $\mu =-\left( \dfrac{n_{2}}{f}\ddot{f}+\dfrac{n_{3}}{h}\frac{\partial
^{2}h}{\partial t^{2}}\right) $

\item $\left( M_{2},g_{2}\right) $ is Einstein with factor $\left( \mu
f^{2}+f^{\sharp }\right) $ if $H^{h}\left( X_{2},Y_{2}\right) =0$ for any $%
X_{2},Y_{2}\in \mathfrak{X}(M_{2})$ and

\item $\left( M_{3},g_{3}\right) $ is Einstein with factor $\left( \mu
h^{2}+h^{\sharp }\right) $.
\end{enumerate}
\end{theorem}

\begin{corollary}
Let $\bar{M}=\left( I\times _{f}M_{2}\right) \times _{h}M_{3}$ be an
Einstein sequential GRW space-time with metric $\bar{g}=\left( -dt^{2}\oplus
f^{2}g_{2}\right) \oplus h^{2}g_{3}$ and factor $\mu $. Then

\begin{enumerate}
\item $\left( \bar{M},\bar{g}\right) $ is Ricci flat if $n_{2}h\ddot{f}%
+n_{3}f\frac{\partial ^{2}h}{\partial t^{2}}=0$,

\item $\left( M_{2},g_{2}\right) $ is Ricci flat if $\mu f^{2}+f^{\sharp }=0$
and $H^{h}\left( X_{2},Y_{2}\right) =0$ for any $X_{2},Y_{2}\in \mathfrak{X}%
(M_{2})$ and

\item $\left( M_{3},g_{3}\right) $ is Ricci flat if $\mu h^{2}+h^{\sharp }=0$%
.
\end{enumerate}
\end{corollary}

The converse of the above theorem is considered in the following result.

\begin{theorem}
Let $\bar{M}=\left( I\times _{f}M_{2}\right) \times _{h}M_{3}$ be a
sequential GRW space-time with metric $\bar{g}=\left( -dt^{2}\oplus
f^{2}g_{2}\right) \oplus h^{2}g_{3}$. Then $\left( \bar{M},\bar{g}\right) $
is Einstein with factor $\mu $ if

\begin{enumerate}
\item $H^{h}\left( X_{2},Y_{2}\right) =0$ for any $X_{2},Y_{2}\in \mathfrak{X%
}(M_{2})$,

\item $\left( M_{i},g_{i}\right) $ is Einstein with factor $\mu _{i},i=2,3$,

\item $\mu _{2}+f\ddot{f}+\left( n_{2}-1\right) \dot{f}^{2}=\mu f^{2}$

\item $\mu _{3}+h\dfrac{\partial ^{2}h}{\partial t^{2}}-\left(
n_{3}-1\right) \left\Vert \mathrm{grad}h\right\Vert ^{2}=\mu h^{2}$

\item $\dfrac{n_{2}}{f}\ddot{f}+\dfrac{n_{3}}{h}\dfrac{\partial ^{2}h}{%
\partial t^{2}}=-\mu $
\end{enumerate}
\end{theorem}

\subsection{Sequential Standard Static Space-times}

\begin{theorem}
\label{P3}Let $\bar{M}=\left( M_{1}\times _{f}M_{2}\right) \times _{h}I$ be
a sequential standard static space-time with metric $\bar g=\left(
g_{1}\oplus f^{2}g_{2}\right) \oplus h^{2}\left( -dt^{2}\right) $ and also
let $X_{i},Y_{i}$ $\in \mathfrak{X}(M_{i})$. Then

\begin{enumerate}
\item $\bar{\nabla}_{X_{1}}Y_{1}=\nabla _{X_{1}}^{1}Y_{1}$

\item $\bar{\nabla}_{X_{1}}X_{2}=\bar{\nabla}_{X_{2}}X_{1}=X_{1}\left( \ln
f\right) X_{2}$

\item $\bar{\nabla}_{X_{2}}Y_{2}=\nabla _{X_{2}}^{2}Y_{2}-fg_{2}\left(
X_{2},Y_{2}\right) \mathrm{grad}^{1}f$

\item $\bar{\nabla}_{X_{i}}\partial _{t}=\bar{\nabla}_{\partial
_{t}}X_{i}=X_{i}\left( \ln h\right) \partial _{t},i=1,2$

\item $\bar{\nabla}_{\partial _{t}}\partial _{t}=h\mathrm{grad}h$
\end{enumerate}
\end{theorem}

\begin{theorem}
Let $\bar{M}=\left( M_{1}\times _{f}M_{2}\right) \times _{h}I$ be a
sequential standard static space-time with metric $\bar g=\left( g_{1}\oplus
f^{2}g_{2}\right) \oplus h^{2}\left( -dt^{2}\right) $ and also let $%
X_{i},Y_{i},Z_{i}$ $\in \mathfrak{X}(M_{i})$. Then

\begin{enumerate}
\item \textrm{\={R}}$\left( X_{1},Y_{1}\right) Z_{1}=R^{1}\left(
X_{1},Y_{1}\right) Z_{1}$

\item \textrm{\={R}}$\left( X_{2},Y_{2}\right) Z_{2}=R^{2}\left(
X_{2},Y_{2}\right) Z_{2}-\left\Vert \mathrm{grad}^{1}f\right\Vert ^{2}\left[
g_{2}\left( X_{2},Y_{2}\right) Y_{2}-g_{2}\left( Z_{2},Y_{2}\right) X_{2}%
\right] $

\item \textrm{\={R}}$\left( X_{1},Y_{2}\right) Z_{1}=\dfrac{-1}{f}%
H_{1}^{f}\left( X_{1},Z_{1}\right) Y_{2}$

\item \textrm{\={R}}$\left( X_{1},Y_{2}\right) Z_{2}=fg_{2}\left(
Y_{2},Z_{2}\right) \nabla _{X_{1}}^{1}\mathrm{grad}^{1}f$

\item \textrm{\={R}}$\left( X_{1},Y_{2}\right) \partial _{t}=$\textrm{\={R}}$%
\left( \partial _{t},\partial _{t}\right) \partial _{t}=$\textrm{\={R}}$%
\left( X_{i},Y_{i}\right) Z_{j}=0,i\neq j$

\item \textrm{\={R}}$\left( X_{i},\partial _{t}\right) Z_{j}=\dfrac{-1}{h}%
H^{h}\left( X_{i},Z_{j}\right) \partial _{t},i,j=1,2$

\item \textrm{\={R}}$\left( X_{i},\partial _{t}\right) \partial
_{t}=-h\nabla _{X_{i}}\mathrm{grad}h,i=1,2$
\end{enumerate}
\end{theorem}

Now consider the Ricci curvature \textrm{\={R}ic} of a sequential standard
static space-time of the form $\left( M_{1}\times _{f}M_{2}\right) \times
_{h}I.$

\begin{theorem}
Let $\bar{M}=\left( M_{1}\times _{f}M_{2}\right) \times _{h}I$ be a
sequential standard static space-time with metric $\bar g=\left( g_{1}\oplus
f^{2}g_{2}\right) \oplus h^{2}\left( -dt^{2}\right)$ and also let $%
X_{i},Y_{i},Z_{i}$ $\in \mathfrak{X}(M_{i})$. Then

\begin{enumerate}
\item \textrm{\={R}ic}$\left( X_{1},Y_{1}\right) =$\textrm{Ric}$^{1}\left(
X_{1},Y_{1}\right) -\dfrac{n_{2}}{f}H_{1}^{f}\left( X_{1},Y_{1}\right) -%
\dfrac{1}{h}H^{h}\left( X_{1},Y_{1}\right) $

\item \textrm{\={R}ic}$\left( X_{2},Y_{2}\right) =$\textrm{Ric}$^{2}\left(
X_{2},Y_{2}\right) -g_{2}\left( X_{2},Y_{2}\right) f^{\sharp }-\dfrac{1}{h}%
H^{h}\left( X_{2},Y_{2}\right) $

\item \textrm{\={R}ic}$\left( \partial _{t},\partial _{t}\right) =h\Delta h$

\item \textrm{\={R}ic}$\left( X_{i},Y_{j}\right) =0,i\neq j$

where $f^{\sharp }=f\Delta ^{1}f+\left( n_{2}-1\right) \left\Vert \mathrm{%
grad}^{1}f\right\Vert ^{2}$.
\end{enumerate}
\end{theorem}

A sequential standard static space-time $\left( M_{1}\times _{f}M_{2}\right)
\times _{h}I$ is Einstein with factor $\mu $ if%
\begin{equation}
\mathrm{\bar{R}ic}\left( X,Y\right) =\mu \bar{g}\left( X,Y\right)
\label{e41}
\end{equation}%
In this case%
\begin{equation*}
\mu =-\frac{\Delta h}{h}
\end{equation*}%
But taking the trace of equation (\ref{e41}) we get that%
\begin{equation*}
\mu =\frac{r}{n_{1}+n_{2}+1}
\end{equation*}%
where $r$ is the scalar curvature i.e.%
\begin{equation*}
r=-\frac{\Delta h}{h}\left( n_{1}+n_{2}+1\right)
\end{equation*}%
Moreover,%
\begin{equation*}
\mathrm{Ric}^{1}\left( X_{1},Y_{1}\right) -\dfrac{n_{2}}{f}H_{1}^{f}\left(
X_{1},Y_{1}\right) -\dfrac{1}{h}H^{h}\left( X_{1},Y_{1}\right) =\mu
g_{1}\left( X_{1},Y_{1}\right)
\end{equation*}%
and%
\begin{equation*}
\mathrm{Ric}^{2}\left( X_{2},Y_{2}\right) -g_{2}\left( X_{2},Y_{2}\right)
f^{\sharp }-\dfrac{1}{h}H^{h}\left( X_{2},Y_{2}\right) =\mu f^{2}g_{2}\left(
X_{2},Y_{2}\right)
\end{equation*}

\begin{corollary}
Let $\bar{M}=\left( M_{1}\times _{f}M_{2}\right) \times _{h}I$ be an
Einstein sequential standard static space-time with metric $\bar{g}=\left(
g_{1}\oplus f^{2}g_{2}\right) \oplus h^{2}\left( -dt^{2}\right) $. Then the
scalar curvature $r$ of $\bar{M}$ is given by%
\begin{equation*}
r=-\frac{\Delta h}{h}\left( n_{1}+n_{2}+1\right)
\end{equation*}
\end{corollary}

\begin{corollary}
Let $\bar{M}=\left( M_{1}\times _{f}M_{2}\right) \times _{h}I$ be an
Einstein sequential standard static space-time with metric $\bar{g}=\left(
g_{1}\oplus f^{2}g_{2}\right) \oplus h^{2}\left( -dt^{2}\right) $. Then

\begin{enumerate}
\item $\left( M_{1},g_{1}\right) $ is Einstein with factor $\mu $ if $%
n_{2}hH_{1}^{f}\left( X_{1},Y_{1}\right) -fH^{h}\left( X_{1},Y_{1}\right) =0$%
,

\item $\left( M_{2},g_{2}\right) $ is Einstein with factor $\mu
f^{2}+f^{\sharp}$ if $H^{h}\left( X_{2},Y_{2}\right) =0$
\end{enumerate}
\end{corollary}


\begin{thebibliography}{99}
\bibitem{Apostolopoulos:2005} P. S. Apostolopoulos and J. G. Carot, \emph{%
Conformal symmetries in warped manifolds}, Journal of Physics: Conference
Series \textbf{8} (2005), 28--33.

\bibitem{AD1} D.E. Allison , \emph{Energy conditions in standard static
space-times}, General Relativity and Gravitation, \textbf{20}(1998), No. 2,
115-122.

\bibitem{AD} D.E. Allison, \emph{Geodesic Completeness in Static Space-times}%
, Geometriae Dedicata, \textbf{26} (1988), 85-97.

\bibitem{GES} D.E. Allison and B. \"{U}nal, \emph{Geodesic Structure of
Standard Static Space-times}, Journal of Geometry and Physics, \textbf{46}%
(2003), No.2, 193-200.

\bibitem{ACD} M.T. Anderson, P. T. Chrusciel and E. Delay, \emph{%
Non-trivial, static, geodesically complete, vacuum space-times with a
negative cosmological constant}, J. High Energy Phys. \textbf{10}(2002).

\bibitem{BEE} J. K. Beem, P. E. Ehrlich and K. L. Easley, \textit{Global
Lorentzian Geometry}, (2nd Ed.), Marcel Dekker, New York, 1996.

\bibitem{Berestovskii:2008} V. N. Berestovskii, Yu. G. Nikonorov, \emph{%
Killing vector fields of constant length on Riemannian manifolds}, Siberian
Mathematical Journal, \textbf{49}(2008), Issue 3 , pp 395-407

\bibitem{Besse2008} A. L. Besse\textbf{, }\emph{Einstein Manifolds},
Classics in Mathematics, Springer-Verlag, Berlin, 2008.

\bibitem{Bishop:1969} R. L. Bishop and B. O'Neill, \emph{Manifolds of
negative curvature}, Trans. Amer. Math. Soc. \textbf{145} (1969), 1-49.

\bibitem{Chen2014} B. Y. Chen, \emph{A simple characterization of
generalized Robertson-Walker space-times}, General Relativity and
Gravitation, \textbf{46}(2014), 1833-1839

\bibitem{Dobarro:1987} F. Dobarro E. Lam\'{\i} Dozo, \emph{Scalar Curvature
and Warped Products Of Riemann Manifolds, }Trans. Amer. Math. Soc. \textbf{%
303} (1987), no. 1, 161-168.

\bibitem{Unal:2012} F. Dobarro and B. \"{U}nal, \emph{Characterizing killing
vector fields of standard static space-times}, J. Geom. Phys. \textbf{62}
(2012), 1070--1087.

\bibitem{Dobbaro2005} F. Dobarro, B. \"{U}nal, \emph{Curvature of multiply
warped products}, Journal of Geometry and Physics, \textbf{51}(2005), no. 1,
75-106.

\bibitem{Dobbaro2008} F. Dobarro, B. \"Unal, \emph{Curvature in special base
conformal warped products}, Acta Appl. Math. \textbf{104}(2008), no. 1, 1-46.

\bibitem{Dumitru:2015} D. Dumitru, \emph{On multiply Einstein warped products%
}, Annals of the Alexandru Ioan Cuza University - Mathematics, to appear.

\bibitem{Duggala:2005} K. L. Duggal and R. Sharma,\emph{Conformal killing
vector fields on spacetime solutions of Einstein's equations and initial data%
}, Nonlinear Analysis \textbf{63} (2005) e447 -- e454.

\bibitem{Ivancevic:2007} V. G. Ivancevic and T. T. Ivancevic, \emph{Applied
Differential Geometry: A Modern Introduction}, World Scientific Publishing
Co. Ltd, London, 2007.

\bibitem{Kuhnel:1997} W. Kuhnel and H. Rademacher, \emph{Conformal vector
fields on pseudo-Riemannian spaces}, Journal of Geometry and its
Applications, \textbf{7}(1997), 237--250.

\bibitem{Operea:2008} T. Oprea, \emph{2-Killing Vector Fields on Riemannian
Manifolds}, Balkan Journal of Geometry and Its Applications, \textbf{13}%
(2008), No.1, 87-92.

\bibitem{Oneill:1983} B. O'Neill, \emph{Semi-Riemannian Geometry with
Applications to Relativity}, Academic Press Limited, London, 1983.

\bibitem{Sanchez99} M. S\'{a}nchez, \emph{On the Geometry of Generalized
Robertson-Walker Spacetimes: Curvature and Killing fields}, J. Geom. Phys., 
\textbf{31} (1999), no.1, 1-15.

\bibitem{Sanchez98} M. S\'{a}nchez, \emph{\ On the Geometry of Generalized
Robertson-Walker Spacetimes: geodesics}, Gen. Relativ. Gravitation, \textbf{%
30} (1998), no.6, 915-932.

\bibitem{Shenawy:2015} S. Shenawy and B. \"Unal, \emph{$2-$Killing vector
fields on warped product manifolds}, International Journal of Mathematics, 
\textbf{26}(2015), 17 pages.

\bibitem{Shenawy:2016} S. Shenawy and B. \"Unal, \emph{The $W_2-$curvature
tensor on warped product manifolds and applications}, International Journal
of Geometric Methods in Modern Physics, \textbf{13}(2016), No. 07, 1650099
(14 pages).

\bibitem{Stephani:2003} H. Stephani, D. Kramer, M. MacCallum, C.
Hoenselaers, and E. Herlt, \emph{Exact Solutions of Einstein's Field
Equations.} Second Edition, Cambridge University Press, Cambridge, 2003.

\bibitem{Unal2001A} B. \"Unal, \emph{Multiply warped products}, Journal of
Geometry and Physics, \textbf{34}(2001), no. 3-4, 287-301.

\bibitem{Unal:2001} B. \"Unal, \emph{Doubly warped products}, Differential
Geometry and its Applications \textbf{15} (2001), 253--263.
\end{thebibliography}
\end{document}